\newcommand{\Pb}{\mathrm{P}}
\begin{document}\label{begin}
\DeclareGraphicsExtensions{.jpeg}
\head 
{
} {Multivariate compounds with equal number of summands   
} {  Pavlina K. Jordanova       \footnote{The author is partially supported by Project RD-08-69/02.02.2016 from the Scientific Research Fund in
Konstantin Preslavsky University of Shumen, Bulgaria.
}           
} {Multivariate compounds with equal number of summands     
} {Pavlina K. Jordanova
}
{.....
}
{....
}

\begin{abstract}
The paper considers multivariate discrete random sums with equal number of summands.
Such distributions describe    the total claim amount received by a company in a fixed time point. In Queuing theory they characterize cumulative waiting times of customers up to time t. In Theory of branching processes they model the number of heirs at a fixed point in time.

Here some general properties and formulae for numerical characteristics of these distributions are derived and some particular cases are considered.
\end{abstract}

\kwams{62E15}
  {Compound distributions; Multinomial distribution; Negative multinomial distribution}

\section{Introduction}

Univariate random sums, are considered intensively in the beginning of the last century. Abraham Wald \cite{Wald} found the relation between the numerical characteristics of these random sums, their summands and their number of summands. Nowadays his results are well known as "Wald equalities".  Investigation of the particular cases starts with Poisson distributed number of summands and geometrically distributed on positive integers summands. This case is related with the names of Alfred Aeppli (1924) and George P$\acute{o}$lya (1930). An exploration of univariate compound Poisson processes with their applications in insurance can be seen e.g. in the works of O. Lundberg \cite{Lundberg} (1940).

Here we consider the corresponding general multivariate distributions. Along the paper "$\stackrel{\rm d}{=}$" stands for the equality in
distribution of two random variables (r.vs.), "$\equiv$" for coincidence of two classes of probability laws, $FI\, X : = \frac{Var\,X}{E\,X}$ is the Fisher index of dispersion, $CV (X, Y) = \sqrt{\frac{cov(X, Y)}{EX\, EY}}$,  and $CV \, X = \frac{\sqrt{Var\,X}}{E\,X}$ is the coefficient of variation of the distribution of the random variable (r.v.) $X$. The sign "$\sim$" expresses the fact, that the distribution of the corresponding  r.v. belongs to a given class of probability laws.

Let $N$ be a non-negative, integer valued r.v. with probability generating function (p.g.f.) $G_N$. Suppose $(Y_{1i}, Y_{2i}, ..., Y_{ki})$, $i = 1, 2, ...$ are independent identically distributed (i.i.d.) random vectors, defined on the same probability space $(\Omega, \mathcal{A}, \Pb)$. Denote their p.g.f. by $G_{Y_{1i}, Y_{2i}, ..., Y_{ki}}$, $i = 1, 2, ...$. The random vector $(X_{1N}, X_{2N},..., X_{kN})$, which coordinates are defined by
\begin{equation}\label{Def1}
X_{sN} = I_{N > 0} \sum_{i=1}^N Y_{si}, \quad s = 1, 2, ..., k
\end{equation}
is called {\bf  compound $N$($N$-stopped sum, random sum) with equal number of  $Y$ summands}. In any particular case, the letters $N$ and $Y$, will be replaced by the name of the corresponding distribution. Briefly we will denote this in the following way

$$(X_{1N}, X_{2N},..., X_{kN}) \sim C\,N\,Y(\vec{a}_N; \vec{b}_Y).$$
Here the letters $C\,N\,Y(\vec{a}_N; \vec{b}_Y)$ have the following meaning: $C$ comes from "com\-pound"$,$ $N$ is the abbreviation of the distribution of the number of summands, with parameters $\vec{a}_N$ and $Y$ is the abbreviation of the distribution of the vector of summands, with parameters $\vec{b}_Y$.

  The cases when $N$ is Poisson distributed and $(Y_{1i}, Y_{2i}, ..., Y_{ki})$ are one of the
\begin{itemize}
  \item Multinomial(Mn),
  \item Negative Multinomial(NMn),
  \item Multivariate Geometric(MGe),
  \item Multivariate Poisson (MPo)
\end{itemize}
distributed are investigated in 1962, by G. Smith \cite{Smith}. The particular case, when the summands are MGe on positive integers and the number of summands are Poisson, is investigated in series of papers of Minkova et. al, see e.g.  \cite{LedaBala} and \cite{LedaBala1}. In 1971, Khatri \cite{Khatri} mentioned that the class of these distributions is too wide and investigates some multivariate particular cases. In 1981 Charalambides \cite{Charalambides}, together with Papageorgiou \cite{ChPapageorgiou} explored bivariate case. Charalambides \cite{Charalambides} has called all distributions with p.g.f. $G_N(G_{Y_{11}, Y_{21}}(z_1, z_2))$ "generalized", and has mentioned that many bivariate contagious and compound distributions belong to this class. In 1996 Wang \cite{Wang}, obtained multivariate compound Poisson distributions  as the limiting distributions of multivariate sums of independent random vectors. He called the distribution, defined in (\ref{Def1}),
"multivariate compound distribution of type I". The covariances formula for the general case of these distributions
\begin{equation}\label{covRaluca}
cov(X_{iN}, X_{jN}) = EN cov(Y_{i1}, Y_{j1}) + E Y_{i1} E Y_{j1} Var N
\end{equation}
can be found in Sundt et. al \cite{Sundt}. In series of papers they consider recursions formulae for these distributions.

This paper develops the results in Jordanova \cite{Jordanova2016}. Their proves are presented and new explicit relations between the probability mass functions (p.m.fs.) and the mean square regressions in particular cases are obtained. After the characterisation, some simulations of realizations of random vectors with such distributions are made, and the dependence between their coordinates is visualised. These observations show that these distributions are appropriate for modelling of both: linear dependence between the coordinates and clustering in the observations. They can serve to overcome Simpson's paradox, described e.g. in Colin \cite{Colin}. The main machinery of probability generating functions (p.g.fs.)  used here can be seen e.g. in the book of Johnson et. al \cite{bib3}.

In the next section we investigate the general properties of these distributions without specifying the probability laws of the summands in (\ref{Def1}), or those of the number of summands. Then, in Section 3, the particular cases of Mn and NMn summands are considered. The paper finishes with some conclusive remarks.

\section{General properties of multivariate compound distributions}

In this section we investigate the main properties of the distribution of the multivariate random sums, defined in (\ref{Def1}), together with their numerical characteristics and conditional distributions.

\begin{theorem}\label{ThGeneral} Let $X_{1N}, X_{2N},..., X_{kN}$ be a random vector, defined in (\ref{Def1}).

Its distribution possesses the following properties.

\begin{enumerate}
  \item Characterisation of the joint probability distribution.

  If the p.g.fs. exist in $(z_1, z_2, ..., z_k)$, then
        \begin{equation}\label{PGFGeneral}
    G_{X_{1N}, X_{2N},..., X_{kN}}(z_1, z_2, ..., z_k) = G_N (G_{Y_{11}, Y_{21}, ..., Y_{k1}}(z_1, z_2, ..., z_k)).
    \end{equation}
  Its probability mass function (p.m.f.) satisfies the following equalities

  $P(X_{1N} = 0,  X_{2N} = 0, ..., X_{kN} = 0) = G_N(G_{Y_{11}, Y_{21}, ..., Y_{k1}}(0, 0, ..., 0)) = $
   \begin{equation}\label{pmf0}
    = G_N(P(Y_{11} = 0,  Y_{21} = 0, ..., Y_{k1} = 0)).
   \end{equation}
   If the following derivatives exist, then for $s = 1, 2, .., k$, $x_s = 0, 1, ...$:  $(x_1, x_2, ..., x_k) \not= (0, 0, ..., 0)$,

   $P(X_{1N} = x_1,  X_{2N} = x_2, ..., X_{kN} = x_k) = $
    \begin{equation}\label{pmf}
   = \frac{\partial^{x_1 + x_2 + ... + x_k}G_N (G_{Y_{11}, Y_{21}, ..., Y_{k1}}(z_1, z_2, ..., z_k))}{x_1!x_2!...x_k!\partial z_1^{x_1}\partial z_2^{x_2}...\partial z_k^{x_k}}|_{(z_1, z_2, ..., z_k) = (0, 0, ..., 0)} =
   \end{equation}
    \begin{equation}\label{pmffpv}
   = \sum_{n = 1}^\infty P(N = n) \sum_{i_{11} + i_{12} + ... + i_{1n} = x_1} \sum_{i_{21} + i_{22} + ... + i_{2n} = x_2} ... \sum_{i_{k1} + i_{k2} + ... + i_{kn} = x_k}
    \end{equation}
    $$\prod_{s = 1}^n  P(Y_{1s} = i_{1s}, Y_{2s} = i_{2s}, ..., Y_{ks} = i_{ks}).$$
   \item Characterisation of subsets of the coordinates.

   For all $r = 1, 2, ..., k$ and for all subset of coordinates $X_{i_1N}, X_{i_2N}, ..., X_{i_rN}$,
   \begin{equation}\label{pgfsubset}
    G_{X_{i_1N}, X_{i_2N},..., X_{i_rN}}(z_{i_1}, z_{i_2}, ..., z_{i_r}) = G_N (G_{Y_{11}, Y_{21}, ..., Y_{k1}}(\vec{z}_{i\overline{i}})),
    \end{equation}
  where the vector $\overrightarrow{z}_{i\overline{i}}$ has coordinates equal to $1$ on the places different from $i_1, i_2, ..., i_r$ and correspondingly coordinates $z_{i_1}, z_{i_2}, ..., z_{i_r}$ on the other places.

  \bigskip

  $P(X_{i_1N} = x_{i_1},  X_{i_2N} = x_{i_2}, ..., X_{i_rN} = x_{i_r}) = $
   \begin{equation}\label{pmfsubsets}
   = \frac{\partial^{x_{i_1} + x_{i_2} + ... + x_{i_r}}G_N (G_{Y_{11}, Y_{21}, ..., Y_{k1}}(\vec{z}_{i\overline{i}}))}{x_{i_1}!x_{i_2}!...x_{i_r}!\partial z_{i_1}^{x_{i_1}}\partial z_{i_2}^{x_{i_2}}...\partial z_{i_r}^{x_{i_r}}}|_{(z_{i_1}, z_{i_2}, ..., z_{i_r}) = (0, 0, ..., 0)}.
   \end{equation}
\item Marginal distributions and their dependence on $N$. For $i = 1, 2, ..., k$,
  \begin{equation}\label{pgfmarginal}
G_{X_{iN}}(z_{i}) = G_N (G_{Y_{11}, Y_{21}, ..., Y_{k1}}(1, 1, ..., 1, z_i, 1, ..., 1)),
\end{equation}
    $P(X_{iN} = 0) = G_N(P(Y_{i1} = 0)).$

    For $x_i \not= 0$,  $ P(X_{iN} = x_i) =$
   \begin{equation}\label{marginal}
    = \sum_{n = 1}^\infty P(N = n) \sum_{j_{1} + j_{2} + ... + j_{n} = x_i} P(Y_{i1} = j_{1})P(Y_{i1} = j_{2})...P(Y_{i1} = j_{n}).
    \end{equation}
   If $E\,Y_{i_1} < \infty$ and $E\,N < \infty$, then
\begin{equation}\label{expectation}
E\,X_{i_N} = E\,N\, E\,Y_{i_1}, \quad E\,(X_{i_N}|N = n) = n\,E\,Y_{i_1}.
\end{equation}
   If $E\,(Y_{i1}^2) < \infty$ and $E\,(N^2) < \infty$, then
\begin{equation}\label{veriance}
Var\, X_{iN} = Var\, N \, (E\,Y_{i_1})^2 + E\,N Var\, Y_{i_1}, \, FI\,X_{iN} =  FI\, N \, E\,Y_{i_1} + FI\, Y_{i_1}
\end{equation}
\begin{equation}\label{cov}
cov(X_{i_N}, N) = EY_{i_1}\, Var \,N,
\end{equation}
$cor(X_{i_N}, N) =$
\begin{equation}\label{cor}
= E\,Y_{i_1} \sqrt{\frac{Var\, N}{Var\, N \, (E\,Y_{i_1})^2 + E\,N Var\, Y_{i_1}}} = \sqrt{\frac{FI\, N}{FI\, N + (CV\, Y_{i_1})^2}}.
\end{equation}
 \item Conditional distributions.
     \begin{equation}\label{conditionals}
      P(X_{iN} = x_i| X_{jN} = x_j) = \frac{1}{x_i!}\frac{\frac{\partial^{x_i + x_j}G_N (G_{Y_{11}, Y_{21}, ..., Y_{k1}}(\vec{z}_{ij\overline{ij}}))}{\partial z_i^{x_i}\partial z_j^{x_j}}|_{(z_{i}, z_{j}) = (0, 0)}}    {\frac{\partial^{x_j}G_N (G_{Y_{11}, Y_{21}, ..., Y_{k1}}(1, ..., 1, z_j, 1, ..., 1))}{\partial z_i^{x_i}}|_{z_j = 0}},
      \end{equation}
    where the vector $\vec{z}_{ij\overline{ij}}$ has coordinates equal to $1$ on the places different from $i$, $j$ and correspondingly coordinates $z_i, z_j$, on the places  $i$ and $j$.
   \begin{equation}\label{conditionalpgf}
       G_{X_{iN}}(z_i| X_{jN} = x_j) =  \frac{\frac{\partial^{x_j}G_N (G_{Y_{11}, Y_{21}, ..., Y_{k1}}(\vec{z}_{ij\overline{ij}}))}{\partial z_j^{x_j}}}
   {\frac{\partial^{x_j}G_N (G_{Y_{11}, Y_{21}, ..., Y_{k1}}(1, ..., 1, z_j, 1, ..., 1))}{\partial z_j^{x_j}}}|_{z_j = 0},
   \end{equation}
   \begin{equation}\label{conditionalexpectation}
    E (X_{iN}| X_{jN} = x_j) = \frac{\frac{\partial}{\partial z_i}\left[\frac{\partial^{x_j}G_N (G_{Y_{11}, Y_{21}, ..., Y_{k1}}(\vec{z}_{ij\overline{ij}}))}{\partial z_j^{x_j}}|_{z_j = 0}\right]|_{z_i = 1}}
   {\frac{\partial^{x_j}G_N (G_{Y_{11}, Y_{21}, ..., Y_{k1}}(1, ..., 1, z_j, 1, ..., 1))}{\partial z_j^{x_j}}|_{z_j = 0}},
   \end{equation}
  \item Dependence between the coordinates.

    $cor(X_{iN}, X_{jN}) =$
    \begin{equation}\label{corelationX}
    = \frac{EN \, cov(Y_{i1}, Y_{i2}) + EY_{i1} EY_{j1} Var\, N}{\sqrt{[Var\, N \, (E\,Y_{i_1})^2 + E\,N Var\, Y_{i_1}][Var\, N \, (E\,Y_{j_1})^2 + E\,N Var\, Y_{j_1}]}} =
    \end{equation}
    $$= \frac{[CV\,(Y_{i1}, Y_{i2})]^2 + FI\, N}{\sqrt{[(CV\, Y_{i1})^2+ FI\, N][(CV\, Y_{j1})^2+ FI\, N]}}.$$
   \item Distributions of the sum of the coordinates.
   \begin{equation}\label{sumpgf}
   G_{X_{1N} + X_{2N} + ... + X_{kN}}(z) = G_N (G_{Y_{11}, Y_{21}, ..., Y_{k1}}(z, z, ..., z)),
   \end{equation}
   \begin{equation}\label{pmfsum}
   P(X_{1N} + X_{2N} + ... + X_{kN} = s) = \frac{\partial^{s}G_N (G_{Y_{11}, Y_{21}, ..., Y_{k1}}(z, z, ..., z))}{s! \partial z^s} |_{z = 0}.
   \end{equation}
   \item Marginal distributions, given the sum of the coordinates. For $m = 0, 1, ..., s$,

   \medskip

   $P(X_{iN} = m| X_{1N} + X_{2N} + ... + X_{kN} = s) = $
   \begin{equation}\label{pmfgivensum}
   = \left(
        \begin{array}{c}
          s \\
          m \\
        \end{array}
      \right)  \frac{\frac{\partial^{s}G_N (G_{Y_{11}, ..., Y_{i-1 1}, Y_{i1}, Y_{i+1 1}, ..., Y_{k1}}(z, ..., z, z_i, z, ..., z))}{\partial z^{s-m} \partial z_i^m} |_{(z, z_i) = (0, 0)}}{\frac{\partial^{s}G_N (G_{Y_{11}, ..., Y_{k1}}(z, ..., z))}{\partial z^s} |_{z = 0}},
      \end{equation}
       $G_{X_{iN}}(z_i| X_{1N} + X_{2N} + ... + X_{kN} = s) =$
       $$ =  \left(
        \begin{array}{c}
          s \\
          m \\
        \end{array}
      \right) \frac{\sum_{m = 0}^s z_i^m \frac{\partial^{s}G_N (G_{Y_{11}, ..., Y_{i-1 1}, Y_{i1}, Y_{i+1 1}, ..., Y_{k1}}(z, ..., z, z_i, z, ..., z))}{\partial z^{s-m} \partial z_i^m} |_{(z, z_i) = (0, 0)}}{\frac{\partial^{s}G_N (G_{Y_{11}, ..., Y_{k1}}(z, ..., z))}{\partial z^s} |_{z = 0}}.$$

 \end{enumerate}
 \end{theorem}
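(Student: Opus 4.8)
The entire theorem is a harvest of one identity --- (\ref{PGFGeneral}) --- so the first step is to prove that. I would condition on $N$, writing the joint p.g.f.\ as $\sum_{n\ge 0}P(N=n)\,E\big[z_1^{X_{1N}}\cdots z_k^{X_{kN}}\,\big|\,N=n\big]$. On $\{N=n\}$ with $n\ge 1$ one has $X_{sN}=\sum_{i=1}^n Y_{si}$, and the i.i.d.-ness of the vectors $(Y_{1i},\dots,Y_{ki})$, $i=1,\dots,n$, factorises the conditional expectation into $\big[G_{Y_{11},\dots,Y_{k1}}(z_1,\dots,z_k)\big]^n$; the term $n=0$ contributes $1=\big[G_{Y_{11},\dots,Y_{k1}}(0,\dots,0)\big]^0$ because of the indicator $I_{N>0}$, so the series collapses to $G_N\!\big(G_{Y_{11},\dots,Y_{k1}}(z_1,\dots,z_k)\big)$. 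Then (\ref{pmf0}) is the value of this at the origin, using $G_{Y_{11},\dots,Y_{k1}}(0,\dots,0)=P(Y_{11}=0,\dots,Y_{k1}=0)$; (\ref{pmf}) is the standard recovery of a joint p.m.f.\ from a p.g.f.\ as a normalised mixed partial derivative at $0$; and for (\ref{pmffpv}) I would expand $G_N(G_Y)=\sum_{n\ge1}P(N=n)[G_Y(z_1,\dots,z_k)]^n$, recognise $[G_Y(z_1,\dots,z_k)]^n$ as the p.g.f.\ of $\big(\sum_{s=1}^n Y_{1s},\dots,\sum_{s=1}^n Y_{ks}\big)$, and read off its $(x_1,\dots,x_k)$ coefficient as the displayed multi-index convolution.

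Parts 2, 3, 6 and the p.g.f.\ formulas in part 7 are then specialisations obtained by substitution in (\ref{PGFGeneral}). Marginalising over a coordinate means setting its variable equal to $1$, which gives (\ref{pgfsubset}), (\ref{pgfmarginal}) and the joint p.g.f.\ of $(X_{iN},\sum_{l\ne i}X_{lN})$; setting all free variables equal to a common $z$ gives (\ref{sumpgf}) because $z_1^{X_{1N}}\cdots z_k^{X_{kN}}\big|_{z_1=\cdots=z_k=z}=z^{X_{1N}+\cdots+X_{kN}}$. Extracting Taylor coefficients at $0$ turns these into (\ref{pmfsubsets}), (\ref{marginal}) and (\ref{pmfsum}). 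For the conditional statements in parts 4 and 7 I would use $P(A\,|\,B)=P(A\cap B)/P(B)$ with each probability realised as a coefficient-extraction operator applied to the appropriate p.g.f.: for instance in (\ref{pmfgivensum}), $\{X_{iN}=m\}\cap\{X_{1N}+\cdots+X_{kN}=s\}=\{X_{iN}=m\}\cap\{\sum_{l\ne i}X_{lN}=s-m\}$, whose probability is $\tfrac{1}{m!(s-m)!}\partial_{z_i}^m\partial_z^{s-m}G_N(G_Y(z,\dots,z_i,\dots,z))\big|_{(z,z_i)=(0,0)}$, and dividing by $P(X_{1N}+\cdots+X_{kN}=s)$ produces the factor $\binom{s}{m}=s!/(m!(s-m)!)$. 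Summing $z_i^m$ times the conditional probabilities over $m$ gives the conditional p.g.f.; applying $\partial_{z_i}$ and setting $z_i=1$ gives (\ref{conditionalexpectation}), and the same computation for the pair $(X_{iN},X_{jN})$ gives (\ref{conditionals})--(\ref{conditionalpgf}).

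The numerical characteristics in part 3 and the correlation in part 5 I would get by conditioning on $N$ rather than by differentiating. From $X_{sN}=\sum_{i=1}^N Y_{si}$ one has $E(X_{iN}|N=n)=n\,EY_{i1}$, hence $EX_{iN}=EN\,EY_{i1}$ (Wald's first identity); the law of total variance gives $Var\,X_{iN}=E[Var(X_{iN}|N)]+Var(E[X_{iN}|N])=EN\,Var\,Y_{i1}+(EY_{i1})^2\,Var\,N$, and dividing by $EX_{iN}$ yields $FI\,X_{iN}=FI\,N\,EY_{i1}+FI\,Y_{i1}$. For (\ref{cov}), $E(X_{iN}N)=E[N\,E(X_{iN}|N)]=EY_{i1}\,E(N^2)$, so $cov(X_{iN},N)=EY_{i1}\,Var\,N$, and (\ref{cor}) follows by forming $cov(X_{iN},N)/\sqrt{Var\,X_{iN}\,Var\,N}$ and rewriting in terms of $FI\,N$ and $CV\,Y_{i1}$. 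For part 5, conditioning on $\{N=n\}$ and splitting the double sum $\sum_{a,b=1}^n E(Y_{ia}Y_{jb})$ into the $n$ diagonal terms (each $E(Y_{i1}Y_{j1})$) and the $n(n-1)$ off-diagonal terms (each $EY_{i1}EY_{j1}$, by independence of distinct vectors) yields $cov(X_{iN},X_{jN})=EN\,cov(Y_{i1},Y_{j1})+EY_{i1}EY_{j1}\,Var\,N$, i.e.\ formula (\ref{covRaluca}); dividing numerator and denominator of the correlation by $EN\,EY_{i1}EY_{j1}$ and using $cov(Y_{i1},Y_{j1})/(EY_{i1}EY_{j1})=[CV(Y_{i1},Y_{j1})]^2$, $Var\,N/EN=FI\,N$ and $Var\,Y_{i1}/(EY_{i1})^2=(CV\,Y_{i1})^2$ gives the second form in (\ref{corelationX}).

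There is no single hard step; the difficulty is bookkeeping. The two points that need genuine care are (i) the $n$-fold self-convolution in (\ref{pmffpv}), where the multi-index sums must be matched exactly against the coefficients of $[G_{Y_{11},\dots,Y_{k1}}]^n$, and (ii) the regularity conditions: every formula involving derivatives or term-by-term differentiation of $G_N(G_Y)$ (namely (\ref{pmf})--(\ref{pmffpv}), (\ref{conditionalexpectation}), the conditional p.g.f.\ identities, and the moment formulas) requires the relevant p.g.f.'s to be finite at the point in question --- equivalently, the stated moment hypotheses, or that the argument lies in the domain of analyticity of $G_N$ --- so that interchanging $\partial_{z_i}$ with $\sum_n$ and with $\sum_m z_i^m$ is legitimate; I would flag these provisos explicitly at each use.
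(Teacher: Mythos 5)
Your proposal is correct and follows essentially the same route as the paper: the composition identity for the p.g.f.\ via conditioning on $N$ (the paper's ``double expectations formula''), coefficient extraction for the p.m.f.'s and conditional laws, substitution of $1$'s or a common $z$ for the marginals and the sum, and Wald's identities with the law of total variance for the moments. The only substantive addition is that you derive the covariance formula $cov(X_{iN},X_{jN})=EN\,cov(Y_{i1},Y_{j1})+EY_{i1}EY_{j1}\,Var\,N$ by splitting the diagonal and off-diagonal terms, whereas the paper simply cites it from Sundt and Vernic; your explicit attention to the regularity conditions for term-by-term differentiation is also a welcome refinement of the paper's terse references.
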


\begin{proof}
\begin{enumerate}
  \item Equality (\ref{PGFGeneral}) follows by the definition of p.g.f. and the Fubini's theorem, in particular also known  as a "Double expectations formula". 
     Double application of equality (\ref{PGFGeneral}) for $(z_1, z_2, ..., z_k) = (0, 0, ..., 0)$, (see e.g. (34.2) in \cite{bib3}) leads us to (\ref{pmf0}).    The main properties of p.g.f., see e.g. p.327 in \cite{Denuitetal}, together with (\ref{PGFGeneral})  entails (\ref{pmf}).
   The total probability formula leads us to (\ref{pmffpv}).
      \item The property (\ref{pgfsubset}) follows by (\ref{PGFGeneral}) and the definition of p.g.f. See e.g. p. 3 in Johnson et. al \cite{bib3}. The proof of the (\ref{pmfsubsets}) is analogous to (\ref{pmf}).

      \item (\ref{pgfmarginal}) and $P(X_{iN} = 0) = G_N(P(Y_{i1} = 0))$ are particular cases, correspondingly of (\ref{pgfsubset}) and (\ref{pmfsubsets}), applied for $k = 1$. Equality (\ref{marginal}) is a consequence of the Total probability formula. (\ref{expectation}) and (\ref{veriance}) are Wald's equalities, applied for the coordinates in our model.  Formulae for double expectation and covariance entail (\ref{cov}).
       (\ref{cor}) is a consequence of the formula for correlation, (\ref{cov}) and (\ref{veriance}).

      \item Definition about conditional probability and (\ref{pmfsubsets}), entail (\ref{conditionals}).
      Replacement of (\ref{pmfsubsets}) and (\ref{pgfmarginal}) in formula (34.48), Johnson et. al \cite{bib3} gives (\ref{conditionalpgf}).
      One of the main properties of p.g.fs. is that its first derivative in $1$ is equal to the expectation of the r.v. ((34.15), Johnson et. al \cite{bib3}). Its application, together with (\ref{conditionalpgf}) gives (\ref{conditionalexpectation}).

      \item Formula for correlation, together with (\ref{covRaluca}) and (\ref{veriance}) prove (\ref{corelationX}).

      \item Following e.g. p.327 (\cite{Denuitetal}) we apply (\ref{PGFGeneral}) for $z_1 = z_2 = ... = z_k = z$ and obtain (\ref{sumpgf}).
             The main properties of p.g.f., see e.g. p.327 in \cite{Denuitetal}, together with (\ref{sumpgf})  entails (\ref{pmfsum}).

   \item Let $m = 0, 1, ..., s$.

   $P(X_{iN} = m, X_{1N} + ... + X_{i-1N} + X_{i+1N}... + X_{kN} = j) = $

  $$ = \frac{\partial^{m+j}G_{X_{iN}, X_{1N} + ... + X_{i-1N} + X_{i+1N}... + X_{kN}}(z_i, z)}{m! j!\partial z_i^m \partial z^j}|_{(z_i,z) = (0,0)} = $$
   $$= \frac{\partial^{m+j}G_N (G_{Y_{11}, ..., Y_{i-1 1}, Y_{i1}, Y_{i+1 1}, ..., Y_{k1}}(z, ..., z, z_i, z, ..., z))}{m! j!\partial z_i^m \partial z^j}|_{(z_i, z) = (0,0)}.$$

 The last equality, together with the definition for conditional probability and (\ref{pmfsum}), entail

 $P(X_{iN} = m| X_{1N} + X_{2N} + ... + X_{kN} = s) = $

 $$= \frac{P(X_{iN} = m, X_{1N} + ... + X_{i-1N} + X_{i+1N}... + X_{kN} = s - m) }{P(X_{1N} + ... + X_{i-1N} + X_{i+1N}... + X_{kN} = s - m)} =$$

   $$
   = \left(
        \begin{array}{c}
          s \\
          m \\
        \end{array}
      \right)  \frac{\frac{\partial^{s}G_N (G_{Y_{11}, ..., Y_{i-1 1}, Y_{i1}, Y_{i+1 1}, ..., Y_{k1}}(z, ..., z, z_i, z, ..., z))}{\partial z^{s-m} \partial z_i^m} |_{(z, z_i) = (0, 0)}}{\frac{\partial^{s}G_N (G_{Y_{11}, ..., Y_{k1}}(z, ..., z))}{\partial z^s} |_{z = 0}}.
     $$
       \end{enumerate}
\end{proof}

{\it Note:} 1. The second formula in (\ref{veriance}) shows that, the bigger the $FI\, N$, or $E\,Y_{i_1}$, or $FI\, Y_{i_1}$ are, the bigger the $FI\, X_{iN}$ is.

2. The strength of the linear dependence between $X_{iN}$ and $N$, expressed in (\ref{cor}), decreases with the increments of $FI\, N$ or $CV\, Y_{i1}$.

\section{Particular cases}

Here we apply the results from Section 2 in some particular cases of compounds with equal number of Mn or NMn distributed summands.
We  make a brief empirical study of their mean square regressions and observe that in any of these multivariate random sums it is close to linear.

\subsection{Multivariate compounds with Multinomial summands.}

Consider a partition $A_1, A_2, ..., A_k$ of the sample space $\Omega$. Mn distribution is the one of the numbers of outcomes $A_i$, $i = 1, 2, ..., k$ in a series of $n$ independent repetitions of a trial. More precisely, let $n \in \mathbb{N}$, $0 < p_i$, $i = 1, 2, ..., k$ and $p_1 + p_2 + ... + p_k = 1$. A vector $(\xi_1, \xi_2, ..., \xi_{k})$ is called Multinomially distributed with parameters $n, p_1, p_2, ..., p_k$, if its probability mass function (p.m.f.) is

$$P(\xi_1 = i_1, \xi_2 = i_2, ..., \xi_k = i_k) =  \left(\begin{array}{c}
                  n  \\
                  i_1, i_2, ..., i_k
                \end{array}\right) p_1^{i_1} p_2^{i_2} ... p_k^{i_k},$$
for $i_1 + i_2 + ... + i_k = n, \,\, i_s = 0, 1, ...,\,\, s = 1, 2, ..., k.$
Briefly $$(\xi_1, \xi_2, ..., \xi_{k}) \sim Mn (n; p_1, p_2, ..., p_k).$$
The distribution of any subset $\xi_{i_1}, \xi_{i_2}, ..., \xi_{i_r}$ of its coordinates, can be easily described using relation that
\begin{equation}\label{subsetMn}
(\xi_{i_1}, \xi_{i_2}, ..., \xi_{i_r}, n - (\xi_{i_1} + \xi_{i_2} + ... + \xi_{i_r})) \sim
\end{equation}
\hfill $\sim Mn(n; p_{i_1}, p_{i_2}, ..., p_{i_r}, 1 - (p_{i_1} + p_{i_2} + ... + p_{i_r})).$

A systematic investigation of this distribution, together with a very good list of references, could be found e.g. in Johnson et al. \cite{bib3}.
Considering this distribution for $k = 1$, we obtain Binomial(Bi) distribution.
For $n = 1$ the Mn distribution is  multivariate Bernoulli distribution. Some of the following results, for the particular case when $N$ is Poisson distributed, can be found in Smith \cite{Smith}. This distribution is partially investigated on in 2002, Daley et. al \cite{DaleyVJ}, p. 113.

 \begin{theorem}\label{Th2} Suppose $(Y_{1i}, Y_{2i}, ..., Y_{ki})$, $i = 1, 2, ...$ are independent Mn distributed random vectors with parameters $(s, p_1, p_2, ..., p_k)$, where $s \in \mathbb{N}$, $p_1 > 0,  p_2 > 0, ..., p_k > 0$ and $p_1 + p_2 + ... + p_k = 1.$ Denote the resulting distribution, defined in (\ref{Def1}) by $(X_{1N},  X_{2N}, ..., X_{kN}) \sim C\,N\,Mn(\vec{a}_N; s, p_1, p_2,..., p_k),$ where the vector $\vec{a}_N$ contains the parameters of the distribution of the r.v. $N$. $(X_{1N},  X_{2N}, ..., X_{kN})$ possesses the following properties.
\begin{enumerate}
  \item $G_{X_{1N}, X_{2N},..., X_{kN}}(z_1, z_2, ..., z_k) = G_{sN} (p_1z_1 + p_2 z_2 + ... + p_kz_k).$
  \item    For $x_1, x_2, ..., x_k = 0, 1, ...$, such that $x_1 + x_2 + ... + x_k$ is a multiple of $s$,
   $P(X_{1N} = x_1,  X_{2N} = x_2, ..., X_{kN} = x_k) = $
   \begin{equation}\label{pmfCMn}
   = \left(
         \begin{array}{c}
           x_1 + x_2 + ... + x_k \\
           x_1, x_2,... , x_k \\
         \end{array}
       \right)p_1^{x_1}p_2^{x_2}...p_k^{x_k}P(Ns = x_1 + x_2 + ... + x_k).
       \end{equation}
       $P(X_{1N} = 0,  X_{2N} = 0, ..., X_{kN} = 0) = P(N = 0).$
   \item For all $r = 1, 2, ..., k$ and for any subset of coordinates $X_{i_1N}, X_{i_2N}, ..., X_{i_rN}$,

  $G_{X_{i_1N}, X_{i_2N},..., X_{i_rN}}(z_{i_1}, z_{i_2}, ..., z_{i_r}) =$
  \begin{equation}\label{pgfsubsetMn}
  = G_{sN} (p_{i_1}z_{i_1} + p_{i_2}z_{i_2} + ... + p_{i_r}z_{i_r} + 1 - (p_{i_1} + ... + p_{i_r})).
  \end{equation}
  $P(X_{i_1N} = x_{i_1},  X_{i_2N} = x_{i_2}, ..., X_{i_rN} = x_{i_r}) = $
   \begin{equation}\label{pmfsubsetsMn}
   = \frac{p_{i_1}^{x_{i_1}}...p_{i_r}^{x_{i_r}}}{x_{i_1}!x_{i_2}!...x_{i_r}!}\sum_{m = 0}^{\infty} \frac{(m + x_{i_1} + x_{i_2} + ... + x_{i_r})!}{m!}.
   \end{equation}
\hfill    $.(1 - (p_{i_1} + ... + p_{i_r}))^m P[(sN = m + x_{i_1} + x_{i_2} + ... + x_{i_r}),$

  \bigskip

   $(X_{i_1N},  X_{i_2N}, ..., X_{i_rN}, Ns - (X_{i_1N} + X_{i_2N} + ... + X_{i_rN})) \sim$

  \medskip

\hfill    $\sim C\,N\, Mn(\vec{a}_N; s, p_{i_1}, p_{i_2}, ..., p_{i_r}, 1 - (p_{i_1} + ... + p_{i_r})).$

  \item For all $i = 1, 2, ..., k$, and $m = 0, 1, ...$,
    \begin{equation}\label{univariatepmfMn}
     P(X_{iN} = m) = \frac{p_i^m}{m!} \sum_{j = 0}^\infty \frac{(j + m)!}{j!}(1 - p_i)^jP(sN = j + m) =
     \end{equation}
    \begin{equation}\label{12}= \left\{\begin{array}{ccc}
                 G_{sN}(1 - p_i) & , & m = 0 \\
                 \sum_{n = 1}^\infty \left(
                                       \begin{array}{c}
                                         ns \\
                                         m \\
                                       \end{array}
                                     \right)p_i^m(1 - p_i)^{ns-m}P(N = n)   & , & m = 1, 2,...
               \end{array}
    \right.\end{equation}
   If $E\,N < \infty$, òî $E\,X_{i_N} = s p_i E\,N.$

   \medskip

   If $E\,(N^2) < \infty$, then  $Var\, X_{iN} = s p_i [s p_i Var\, N + (1 - p_i)E\,N]$ and
   \begin{equation}\label{FICMn}
   FI\, X_{iN}  = 1 + p_i (s FI\, N - 1).
   \end{equation}
      \item $P(X_{iN} = x_i| X_{jN} = x_j) = $
     \begin{equation}\label{conditionalMn}
     = \frac{p_i^{x_i}}{x_i!}\frac{ \sum_{m = 0}^\infty \frac{(m + x_i + x_j)!}{m!}(1 - (p_i + p_j))^m P(sN = m + x_i + x_j)} {\sum_{m = 0}^\infty \frac{(x_j + m)!}{m!} (1 - p_j)^m P(sN = x_j + m)},
     \end{equation}
   $ G_{X_{iN}}(z_i| X_{jN} = x_j) = $
   \begin{equation}\label{condpgfMn} = \frac{ \sum_{m = 0}^\infty \frac{(x_j + m)!}{m!}(p_iz_i + 1 - p_i - p_j)^mP(sN = x_j + m)}
   { \sum_{m = 0}^\infty \frac{(x_j + m)!}{m!}(1 - p_j)^mP(sN = x_j + m)}.
   \end{equation}
    \begin{equation}\label{ConditionalMeanMn}
     E (X_{iN}| X_{jN} = x_j) = p_i\frac{\sum_{m = 0}^\infty \frac{(x_j + m + 1)!}{m!}(1 - p_j)^mP(sN = x_j + m + 1)}{\sum_{m = 0}^\infty \frac{(x_j + m)!}{m!}(1 - p_j)^mP(sN = x_j + m)}
     \end{equation}

     $cov(X_{iN}, X_{jN}) = sp_ip_j[- E\, N + s\, Var\, N ]$

\medskip

    $cor(X_{iN}, X_{jN}) =$
    \begin{equation}\label{correlationCMn}= \frac{\sqrt{p_ip_j}(s Var\, N - EN)}{\sqrt{[sp_i Var\, N + EN(1 - p_i)][sp_j Var\, N + EN(1 - p_j)]}}=
    \end{equation}
    \begin{equation}\label{correlationCMnFI}= \sqrt{\frac{(FI\, X_{iN} - 1)(FI\, X_{jN} - 1)}{FI\, X_{iN} FI\, X_{jN} }}.\end{equation}
\item  If the following expectation exists, then
        \begin{equation}\label{MSRegression}
        E (X_{iN}| X_{jN} = x_j) = \frac{p_i}{p_j}(x_j + 1)\frac{P(X_{jN} = x_j + 1)}{P(X_{jN} = x_j)}.
        \end{equation}
 \item $G_{X_{1N} + X_{2N} + ... + X_{kN}}(z) = G_{sN} (z),$

      $P(X_{1N} + X_{2N} + ... + X_{kN} = m) = P(sN = m).$
\item For $j = 1, 2, ...$, $i = 1, 2, ..., k$,   $(X_{iN}| X_{1N} + X_{2N} + ... + X_{kN} = j) \sim Bi(j; p_i).$
   \end{enumerate}
\end{theorem}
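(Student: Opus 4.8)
The plan is to reduce the conditioning event --- which looks multivariate --- to an event involving $N$ alone, exploiting the linear constraint satisfied by multinomial counts. Since $(Y_{1\ell}, \ldots, Y_{k\ell}) \sim Mn(s; p_1, \ldots, p_k)$ we have $Y_{1\ell} + \cdots + Y_{k\ell} = s$ almost surely, hence by (\ref{Def1})
\begin{equation*}
X_{1N} + X_{2N} + \cdots + X_{kN} = I_{N>0}\sum_{\ell=1}^{N} s = sN \qquad \text{a.s.}
\end{equation*}
(this a.s.\ identity underlies the preceding item and is consistent with (\ref{sumpgf})). Consequently $\{X_{1N} + \cdots + X_{kN} = j\} = \{sN = j\}$, an event of positive probability exactly when $j$ is a multiple of $s$, say $j = ns$ with $P(N = n) > 0$, and then it coincides with $\{N = n\}$. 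For other $j$ the conditional law is not defined, so the statement is to be read with the implicit proviso $s \mid j$, $P(N = j/s) > 0$; from here on I would fix such a $j = ns$.

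On $\{N = n\}$ one has $(X_{1N}, \ldots, X_{kN}) = \sum_{\ell=1}^{n}(Y_{1\ell}, \ldots, Y_{k\ell})$; since $N$ is independent of the whole family of summand vectors, conditioning on $\{N = n\}$ leaves this a sum of $n$ independent $Mn(s; p_1, \ldots, p_k)$ vectors, which is $Mn(ns; p_1, \ldots, p_k)$ by the additivity of the multinomial law (clear from multiplying its generating functions). Its $i$-th one-dimensional marginal is $Bi(ns; p_i) = Bi(j; p_i)$, which is the assertion. One may instead stay inside the formulae already proved: since $\sum_{r\ne i} X_{rN} = sN - X_{iN}$, the pair $(X_{iN}, \sum_{r\ne i} X_{rN})$ is $CNMn(\vec{a}_N; s, p_i, 1 - p_i)$ (the $r = 1$ case of the subset characterisation, item~3), so (\ref{pmfCMn}) gives
\begin{equation*}
P\!\left(X_{iN} = m,\ \sum_{r\ne i} X_{rN} = j - m\right) = \binom{j}{m}\, p_i^{m}(1-p_i)^{j-m}\, P(sN = j),
\end{equation*}
and dividing by $P(X_{1N} + \cdots + X_{kN} = j) = P(sN = j)$ leaves the $Bi(j; p_i)$ mass function $\binom{j}{m} p_i^{m}(1-p_i)^{j-m}$ for $m = 0, 1, \ldots, j$. (The general p.m.f.\ formula (\ref{pmfgivensum}) of Theorem~\ref{ThGeneral}, evaluated with the multinomial generating function, reduces to the same expression.)

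The single point that really needs care --- which I would single out as the main, if mild, obstacle --- is the opening reduction: one must notice that, because multinomial counts sum to the fixed number of trials, the conditioning event is not genuinely multivariate but collapses to $\{N = j/s\}$. Once that is seen, the conclusion uses nothing beyond the independence of $N$ from the summands and the stability of the binomial (equivalently, multinomial) family under convolution.
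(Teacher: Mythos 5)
Your proposal addresses only the last of the nine claims in the theorem --- that $(X_{iN}\mid X_{1N}+\cdots+X_{kN}=j)\sim Bi(j;p_i)$ --- and says nothing about the p.g.f., the p.m.f.s, the moments, the conditional distributions or the regression formula, so as a proof of the full statement it is incomplete. For the item you do treat, the argument is correct and in fact contains two valid routes. Your primary route --- noting that $Y_{1\ell}+\cdots+Y_{k\ell}=s$ a.s.\ forces $X_{1N}+\cdots+X_{kN}=sN$, so the conditioning event collapses to $\{N=j/s\}$, after which independence of $N$ from the summands and closure of the multinomial family under convolution give the conditional joint law $Mn(j;p_1,\ldots,p_k)$ --- is not the paper's. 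The paper instead observes that $(X_{iN},\,sN-X_{iN})\sim C\,N\,Mn(\vec{a}_N;s,p_i,1-p_i)$, evaluates the joint p.m.f.\ via (\ref{pmfCMn}) at $(m,j-m)$, and divides by $P(sN=j)$; this is precisely your secondary, ``stay inside the formulae already proved'' route. Your direct argument buys more: it identifies the entire conditional joint distribution as $Mn(j;p_1,\ldots,p_k)$ rather than only the univariate marginals, and it makes explicit the proviso, glossed over in the statement ``for $j=1,2,\ldots$'', that the conditioning event has positive probability only when $s\mid j$ and $P(N=j/s)>0$. The one hypothesis your route leans on --- independence of $N$ from the sequence of summand vectors --- is implicit in (\ref{Def1}) and already used throughout (e.g.\ in (\ref{PGFGeneral})), so there is no gap on that point; but the remaining eight items of the theorem still require proof.
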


\begin{proof}
\begin{enumerate}
\item  The form of p.g.f. follows by (\ref{PGFGeneral}) and the formula of the p.g.f. of Mn distribution. See e.g. (35.4) in \cite{bib3}.
\item This property is an immediate consequence of (\ref{pmf0}), (\ref{pmf}), the fact that $$X_{1N} + X_{2N} + ...+ X_{kN} = sN,$$ and relation between p.m.f. and p.g.f., see e.g. (34.2) in \cite{bib3}.
\item Here we replace the redundant variables in Equality 1), Th. \ref{Th2} with 1, use one of the main properties of p.g.f. (see e.g. p.327 in \cite{Denuitetal} or Daley et. al \cite{DaleyVJ}, p. 113) and obtain (\ref{pgfsubsetMn}). Using (\ref{subsetMn}) and the definition (\ref{Def1}), we have that

    $(X_{i_1N},  X_{i_2N}, ..., X_{i_rN}, Ns - (X_{i_1N} + X_{i_2N} + ... + X_{i_rN})) \sim $

    \hfill  $ \sim C\,N\,Mn(\vec{a}_N; s, p_{i_1}, p_{i_2}, ..., p_{i_r}, 1 - (p_{i_1} + ... + p_{i_r})).$

     Now we sum up over the possible values of the last coordinate of this vector, use  2), and obtain

 $P(X_{i_1N} = x_{i_1},  X_{i_2N} = x_{i_2}, ..., X_{i_rN} = x_{i_r}) =$
 $$ = \sum_{m = 0}^\infty P(X_{i_1N} = x_{i_1},  X_{i_2N} = x_{i_2}, ..., X_{i_rN} = x_{i_r}, Ns - (X_{i_1N} + X_{i_2N} + ... $$
 \hfill $ ... + X_{i_rN}) = m) = $
 $$ = \sum_{m = 0}^\infty \left(
                            \begin{array}{c}
                              x_{i_1} + x_{i_2} + ... + x_{i_r} + m\\
                              x_{i_1}, x_{i_2}, ..., x_{i_r}, m \\
                            \end{array}
                          \right)p_{i_1}^{x_{i_1}}...p_{i_r}^{x_{i_r}} [1 - (p_{i_1}+ ... +p_{i_r})]^m  .$$
\hfill                           $.P(Ns = x_{i_1} + x_{i_2} + ... + x_{i_r} + m) =$
   $$= \frac{p_{i_1}^{x_{i_1}}...p_{i_r}^{x_{i_r}}}{x_{i_1}!x_{i_2}!...x_{i_r}!}\sum_{m = 0}^{\infty} \frac{(m + x_{i_1} + x_{i_2} + ... + x_{i_r})!}{m!}. [1 - (p_{i_1} + ... + p_{i_r})]^m. $$
\hfill    $. P(sN = m + x_{i_1} + x_{i_2} + ... + x_{i_r}).$
   \item The first equality in (\ref{univariatepmfMn}) is a particular case of (\ref{pmfsubsetsMn}), for $r = 1$. The rest part of (\ref{univariatepmfMn}) is a consequence of the substitution $j + m = ns$ and the definition of p.g.f. The results about the numerical characteristics follow by the Wald's equalities and (\ref{subsetMn}). More precisely $E Y_{i1} = sp_i$ and $Var Y_{i1} = sp_i(1 - p_i)$.
\item The definitions for conditional probability and (\ref{pmfsubsetsMn}) imply (\ref{conditionalMn}).
By (\ref{conditionalpgf})

$G_{X_{iN}}(z_i| X_{jN} = x_j) =$
$$  =  \frac{\frac{\partial^{x_j}G_{sN} (p_iz_i + p_jz_j + 1 - p_i - p_j)}{\partial z_j^{x_j}}}
   {\frac{\partial^{x_j}G_{sN} (p_jz_j + 1 - p_j)}{\partial z_j^{x_j}}}|_{z_j = 0} =  \frac{p_j^{x_j} \left(\frac{\partial^{x_j}}{\partial y^{x_j}}G_{sN}(y)\right)|_{y = p_iz_i + 1 - p_i - p_j}}{p_j^{x_j} \left(\frac{\partial^{x_j}}{\partial y^{x_j}}G_{sN}(y)\right)|_{y = 1 - p_j}} =  $$
   $$ = \frac{\sum_{n = 0}^\infty \left(\frac{\partial^{x_j}}{\partial y^{x_j}}y^{sn}P(N=n)\right)|_{y = p_iz_i + 1 - p_i - p_j}}{\sum_{n = 0}^\infty\left(\frac{\partial^{x_j}}{\partial y^{x_j}}y^{sn}P(N = n)\right)|_{y = 1 - p_j}} = $$
 $$ = \frac{\sum_{n : sn \geq x_j}^\infty \frac{(sn)!}{(sn - x_j)!}(p_iz_i + 1 - p_i - p_j)^{sn-x_j}P(sN = sn)}{\sum_{n : sn \geq x_j}^\infty\frac{(sn)!}{(sn - x_j)!}(1 - p_j)^{sn - x_j}P(sN = sn)}.$$
 After the Substitution $m = sn - x_j$ we obtain (\ref{condpgfMn}).

In order to calculate covariation we use (\ref{covRaluca}) and the formula for covariance of Mn random vector $cov(Y_{i1}, Y_{j1}) = -np_ip_j$. See e.g. Johnson et. al \cite{bib3}. Together with $Var\, X_{iN} = s p_i [s p_i Var\, N + (1 - p_i)E\,N]$ and formula for the correlation they imply (\ref{correlationCMn}). Divide the numerator and denominator in $cor(X_{iN}, X_{jN})$ to $EN$ and come to
   $$ \frac{\sqrt{p_ip_j}(s FI\, N - 1)}{\sqrt{[sp_i FI\, N + 1 - p_i][sp_j FI\, N + 1 - p_j]}}.$$
   Compare this result with (\ref{FICMn}) we see that it is equal to (\ref{correlationCMnFI}).

   \item  By (\ref{ConditionalMeanMn}) and (\ref{univariatepmfMn}),

   $$E (X_{iN}| X_{jN} = x_j) = p_i\frac{\sum_{m = 0}^\infty \frac{(x_j + m + 1)!}{m!}(1 - p_j)^mP(sN = x_j + m + 1)}{\sum_{m = 0}^\infty \frac{(x_j + m)!}{m!}(1 - p_j)^mP(sN = x_j + m)} = $$
    $$ = p_i\frac{\frac{(x_j + 1)!}{p_j^{x_j + 1}}P(X_{jN} = x_j + 1)}{\frac{x_j!}{p_j^{x_j}}P(X_{jN} = x_j)} = \frac{p_i}{p_j}(x_j + 1)\frac{P(X_{jN} = x_j + 1)}{P(X_{jN} = x_j)}.$$
      \item These distributions are obtained by 1) Th. \ref{Th2}, (\ref{sumpgf}), (\ref{pmfsum}) and the properties of p.g.fs.

   \item By (\ref{Def1}), we have $(X_{iN}, X_{1N} + X_{2N} + ... + X_{i-1N} + X_{i+1N} + ... + X_{kN}) \sim$

   $\sim  C\,N\,Mn(\vec{a}_N; s, p_i, 1 - p_i)$.  This, together with the definitions for conditional probability, (\ref{pmfCMn}) and $Mn$ (via its p.m.f.) entail

  $P(X_{iN} = m| X_{1N} + X_{2N} + ... + X_{kN} = j) = $
  $$ = \frac{P(X_{iN} = m, X_{1N} +  ... + X_{i-1N} + X_{i+1N} + ... + X_{kN} = j - m)}{P(X_{1N} + X_{2N} + ... + X_{kN} = j)} =$$
  $$= \frac{\left(
             \begin{array}{c}
               j \\
               m \\
             \end{array}
           \right)
  p_i^{m} (1 - p_i)^{j-m} P(Ns = j)}{P(Ns = j)} =  \left(
             \begin{array}{c}
               j \\
               m \\
             \end{array}
           \right)
  p_i^{m} (1 - p_i)^{j-m},$$ \hfill $m = 0, 1, ..., j.$

  Now, the definition of $Bi$ distribution entails the desired result.
\end{enumerate}
\end{proof}

{\it Note:} 1. As corollaries of the above theorem, when we fix the distribution of $N$, we can obtain many new and well known distributions and their numerical characteristics and conditional distributions.

 \begin{center}
\scalebox{0.5}{\includegraphics[draft=false]{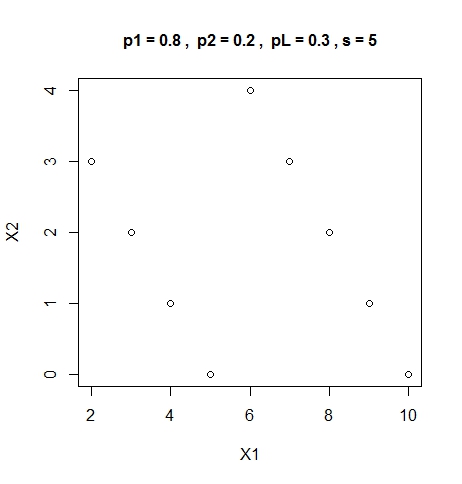}}

{\small Fig. 1.}
\end{center}

   \begin{center}
\scalebox{0.5}{\includegraphics[draft=false]{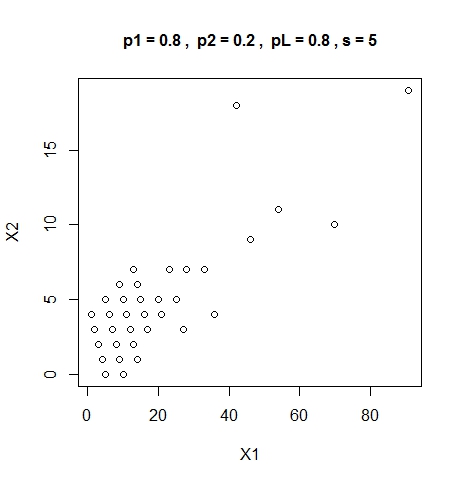}}

{\small Fig. 2. }
\end{center}

\begin{center}
\scalebox{0.5}{\includegraphics[draft=false]{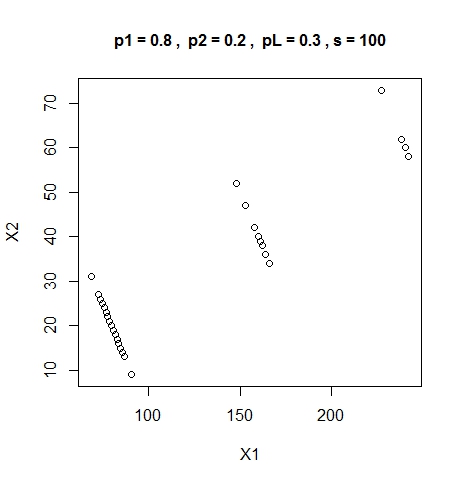}}

{\small Fig. 3.}
\end{center}

   \begin{center}
\scalebox{0.5}{\includegraphics[draft=false]{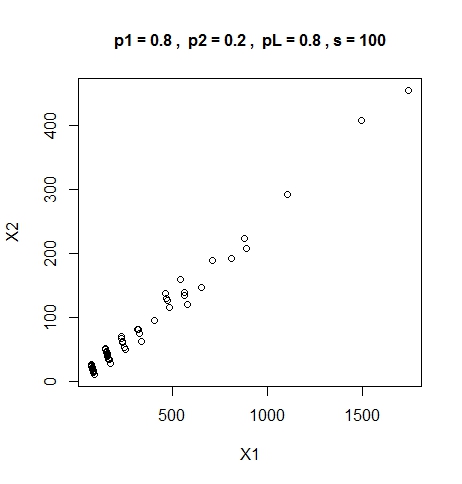}}

{\small Fig. 4. }
\end{center}

2. Here we denote by $LS(p_L)$ the Logarithmic series distribution with parameter  $p_L \in (0, 1)$. In order to observe the form of the dependence between the r.vs. $X_{1N}$, $X_{2N}$ and to model it via their mean square regression (\ref{MSRegression}), we simulate $100$ realizations of $(X_{1N}, X_{2N}) \sim CLSMn(p_L; s, p_1, p_2)$, for different values of the parameters $p_L$, $s$, $p_1$ and $p_2$. Four of the scatter plots of the data  are given on Figures 1 - 4. These observations show that the dependence between these two r.vs. is approximately linear, with slope almost $\frac{p_i}{p_j}$. The relation  (\ref{correlationCMnFI}) entails that the strength of the dependence between $X_{iN}$ and $X_{jN}$ is governed by $FI\, X_{iN}$ and $FI\, X_{jN}$.  When the last two Fisher indexes increase, the correlation between $X_{iN}$ and $X_{jN}$ also increases. Equation (\ref{FICMn}) entails that the bigger the $FI\,N$, or $s$ are, the bigger the $FI\, X_{iN}$ is. The last increases the  strength of the dependence between $X_{iN}$ and $X_{jN}$. All these results were observed empirically. More precisely Figures 1 - 4, show that for bigger $s$ or $p_L$ we obtain stronger linear dependence between $X_{iN}$ and $X_{jN}$.

3. The comparison of the coefficients before $P(N = n)$ in both expressions for the p.m.f. of $X_{iN}$, $i = 1, 2, ..., k$, (the first obtained using the Total probability formula and the second is (\ref{12})) gives us another proof of the equality $$\sum_{j_1 + j_2 + ... + j_n = m} \left(
                                       \begin{array}{c}
                                         s \\
                                         j_1 \\
                                       \end{array}
                                     \right)\left(
                                       \begin{array}{c}
                                         s \\
                                         j_2 \\
                                       \end{array}
                                     \right)...\left(
                                       \begin{array}{c}
                                         s \\
                                         j_n \\
                                       \end{array}
                                     \right) = \left(
                                       \begin{array}{c}
                                         ns \\
                                         m \\
                                       \end{array}
                                     \right).$$
In analogous way, working with the theoretical results, related with multivariate compounds with equal number of summands we can obtain new similar equalities.

\subsection{Compounds with NMn summands}

 In this subsection we consider the case, when the summands in (\ref{Def1}) are NMn. It seems that in 1952, Bates et. al \cite{Bates} introduced NMn distribution. They reached it considering Poisson distribution with Gamma mixing variable. Their first parameter could be a real number. Wishart \cite{Wishart} investigated the case, when the first parameter could be only integer. He called this distribution Pascal multinomial distribution.   From practical point of view, the NMn distribution  is interpreted as the one of the numbers of outcomes $A_i$, $i = 1, 2, ..., k$ before the $n$-th $B$, in series of independent repetitions, where $A_i$, $i = 1, 2, ..., k$ and $B$ form a partition of the sample space.  Let us remind the definition.

 Let $n \in \mathbb{N}$, $0 < p_i$, $i = 1, 2, ..., k$ and $p_1 + p_2 + ... + p_k < 1$. A vector $(\xi_1, \xi_2, ..., \xi_{k})$ is called Negative multinomially distributed with parameters $n, p_1, p_2, ..., p_k$, if its probability mass function (p.m.f.) is

$P(\xi_1 = i_1, \xi_2 = i_2, ..., \xi_k = i_k) = $
$$= \left(\begin{array}{c}
                  n + i_1 + i_2 + ... i_k -1 \\
                  i_1, i_2, ..., i_k,n-1
                \end{array}\right) p_1^{i_1} p_2^{i_2} ... p_k^{i_k} (1  - p_1 - p_2 - ... - p_k)^n,$$
$i_s = 0, 1, ..., s = 1, 2, ..., k.$
Briefly $(\xi_1, \xi_2, ..., \xi_{k}) \sim NMn (n; p_1, p_2, ..., p_k)$.

We will use the distribution of its coordinates. For $m  = 2, 3, ..., k-1$,
\begin{equation}\label{subsetNMn}
(\xi_{i_1}, \xi_{i_2}, ..., \xi_{i_r}) \sim NMn (s; \rho_{i_1}, \rho_{i_2}, ..., \rho_{i_r}),
\end{equation} with $\rho_{i_m} = \frac{p_{i_m}}{1 - \sum_{j \not \in \{i_1, i_2, ..., i_r\}}p_j}$, $ m = 1, 2, ..., r.$
More properties of NMn distribution can be found in Johnson et al. \cite{bib3}.

If $A_1, A_2, ..., A_k$ describe all possible mutually exclusive "successes" and the event $\overline{A}_1 \cap \overline{A}_2 \cap ... \cap \overline{A}_k$ presents the "failure", then the coordinates $\xi_i$ of the above vector, can be interpreted as the number of "successes" of type $A_i$ $i = 1, 2, ..., k$ until n-th "failure". Considering this distribution for $k = 1$, we obtain $NBi$ distribution with parameters $s$ and $(1 - p_1)$.
Let us note that if $k = 2, 3, ...$ the marginal distributions of NMn distributed random vector, defined above are $NBi(n, 1 - \rho_i)$, $\rho_{i} = \frac{p_{i}}{1 - \sum_{j \not = i}p_j}$. More precisely their p.g.f. is
$$G_\xi(z) = Ez^{\xi_{i}} = \left(\frac{1-\rho_i}{1 - \rho_iz}\right)^n, \,\,|z| < \frac{1}{\rho_i}\,\, i = 1, 2, ..., k.$$
Having in mind the univariate distributions, the NMn distribution is sometimes called Multivariate Negative Binomial distribution.
For $n = 1$ the NMn distribution is  Multivariate geometric distribution. Some properties of the bivariate version of this distribution are considered e.g. in 1981, by Phatak et al. \cite{Phatak}. A systematic investigation of multivariate version could be found e.g. in Srivastava et al. \cite{Srivastava}.
Here we consider random sums with Mn distributed summands. The case when $N$ is Poisson distributed is considered in Smith \cite{Smith}. The next theorem generalizes his results.

 \begin{theorem} \label{Th3} Suppose $(Y_{1i}, Y_{2i}, ..., Y_{ki}) \sim NMn(s, p_1, p_2, ..., p_k)$, $i = 1, 2, ...$ are independent. Here $s \in \mathbb{N}$, $p_1 > 0,  p_2 > 0, ..., p_k > 0$ and $$p_1 + p_2 + ... + p_k < 1.$$ Denote by $p_0 = 1 - p_1 - p_2 - ... - p_k$. Then the resulting distribution of
$$(X_{1N},  X_{2N}, ..., X_{kN}) \sim C\, N\, NMn(\vec{a}_N; s, p_1, p_2,..., p_k),$$
defined in (\ref{Def1}), possesses the following properties.

\begin{enumerate}
  \item Its p.g.f. is
  \begin{equation}\label{pgfCNMn}
  G_{X_{1N}, X_{2N},..., X_{kN}}(z_1, z_2, ..., z_k) = G_{sN} \left(\frac{p_0}{1 - p_1z_1 - p_2 z_2 - ... - p_kz_k}\right).
  \end{equation}
  \item The p.m.f. satisfies the equalities:

  $P(X_{1N} = 0,  X_{2N} = 0,  …, X_{kN} = 0) = G_{sN}(p_0).$

\medskip

   For $s = 1, 2, .., k$, $x_s = 0, 1, ...$: $(x_1, x_2, ..., x_k) \not= (0, 0, ..., 0),$

\medskip

$P(X_{1N} = x_1,  X_{2N} = x_2, ..., X_{kN} = x_k) = $

   \begin{equation}\label{pmfCNMnnot0} = p_1^{x_1}p_2^{x_2}...p_k^{x_k} \sum_{j = 1}^\infty \left(
         \begin{array}{c}
           sj + x_1 + x_2 + ... + x_k - 1\\
           x_1, x_2,... , x_k, sj - 1 \\
         \end{array}
       \right)p_0^{js}P(N = j).
       \end{equation}
   \item For all $r = 1, 2, ..., k$ and for all $X_{i_1N}, X_{i_2N}, ..., X_{i_rN}$,

$G_{X_{i_1N}, X_{i_2N},..., X_{i_rN}}(z_{i_1}, z_{i_2}, ..., z_{i_r}) = $
 \begin{equation}\label{pgfsubsetNMn}
   = G_{sN}\left(\frac{\rho_{0r}}{1 - \rho_{i_1}z_{i_1} - \rho_{i_2} z_{i_2} - ... - \rho_{i_r}z_{i_r}}\right),
  \end{equation}
  where $\rho_{i_j} = \frac{p_{i_j}}{p_0 + \sum_{j = 1}^r p_{i_j}}, \quad \rho_{0r} = 1 - \rho_{i_1} - ... - \rho_{i_r},$
  i.e. $$(X_{i_1N},  X_{i_2N}, ..., X_{i_rN}) \sim C\,N\, NMn(\vec{a}_N; s, \rho_{i_1}, \rho_{i_2},..., \rho_{i_r}).$$

  $P(X_{i_1N} = 0,  X_{i_2N} = 0,  …, X_{i_rN} = 0) = G_{sN}(\rho_{0r}).$

\bigskip

   For $s = 1, ..., r$, $x_{i_s} = 0, 1, ...$: $(x_{i_1}, x_{i_2}, ..., x_{i_r}) \not= (0, 0, ..., 0),$

\bigskip

   $P(X_{i_1N} = x_{i_1},  X_{i_2N} = x_{i_2}, ..., X_{i_rN} = x_{i_r}) = $
    \begin{equation}\label{pmfsubsetsNMn}
    = \rho_{i_1}^{x_{i_1}}... \rho_{i_r}^{x_{i_r}}\sum_{j = 1}^{\infty} \left(
                                                                            \begin{array}{c}
                                                                              sj + x_{i_1} + x_{i_2} + ... + x_{i_r} - 1\\
                                                                              x_{i_1}, x_{i_2}, ..., x_{i_r} + sj - 1 \\
                                                                            \end{array}
                                                                          \right)\rho_{0r}^{js}P(N = j).                                                                         \end{equation}
   \item    If $E\,N < \infty$, then $E\,X_{i_N} = s \frac{p_i}{p_0} E\,N.$

   \medskip

   If $E\,(N^2) < \infty$, then  $Var\, X_{iN} = s \frac{p_i^2}{p_0^2} [s Var\, N + (1 + \frac{p_0}{p_i})E\,N],$
   \begin{equation}\label{FICNMn}
   FI\, X_{iN}  = 1 + \frac{p_i}{p_0}[s FI\, N +  1].
   \end{equation}
      \item Dependence between the coordinates.
    \begin{equation}\label{corelationXNMn}
    cor(X_{iN}, X_{jN}) = \sqrt{\frac{(FI\, X_{iN} - 1)(FI\, X_{jN} - 1)}{FI\, X_{iN} FI\, X_{jN} }}.
    \end{equation}
  \item Its conditional distributions satisfy the next equalities.

  For $x_j \not= 0$, $x_i = 0, 1, ...$,
       \begin{equation}\label{conditionalNMn}
       P(X_{iN} = x_i| X_{jN} = x_j) = \frac{1}{x_i!}\left( \frac{p_0 + p_j}{p_0 + p_i + p_j} \right)^{x_j} \left( \frac{p_i}{p_0 + p_i + p_j} \right)^{x_i}.
       \end{equation}
    $$ .\frac{ \sum_{n = 1}^\infty \frac{(sn + x_i + x_j - 1)!}{(sn - 1)!}\left( \frac{p_0}{p_0 + p_i + p_j} \right)^{sn} P(N = n)} {\sum_{n = 1}^\infty \frac{(sn + x_j - 1)!}{(sn - 1)!}\left( \frac{p_0}{p_0 + p_j} \right)^{sn} P(N = n)}.$$
   For $x_i = 1, 2, ...$, $P(X_{iN} = x_i| X_{jN} = 0) =$
    $$ = \frac{\left( \frac{p_i}{p_0 + p_i + p_j} \right)^{x_i}}{G_{sN}\left( \frac{p_0}{p_0 + p_j} \right)}\sum_{n = 1}^\infty \frac{(sn + x_i - 1)!}{x_i!(sn - 1)!}\left( \frac{p_0}{p_0 + p_i + p_j} \right)^{sn} P(N = n),$$
         $$P(X_{iN} = 0| X_{jN} = 0) = \frac{G_{sN}\left( \frac{p_0}{p_0 + p_i + p_j} \right)}{G_{sN}\left( \frac{p_0}{p_0 + p_j} \right)}.$$
\item  For $x_j = 1, 2, ...$
\begin{equation}\label{MSRegressionNMn}
        E (X_{iN}| X_{jN} = x_j) = \frac{p_i}{p_j}(x_j + 1)\frac{P(X_{jN} = x_j + 1)}{P(X_{jN} = x_j)}.
        \end{equation}
   \item  $X_{1N} + X_{2N} + ... + X_{kN} \sim CNNMn(s; 1 - p_0).$
   \item For $j = 1, 2, ...$, $i = 1, 2, ..., k$,    $$(X_{iN}| X_{1N} + X_{2N} + ... + X_{kN} = j) \sim Bi(j; \frac{p_i}{1 - p_0}).$$
      \end{enumerate}
\end{theorem}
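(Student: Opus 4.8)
The plan is to follow the proof of Theorem~\ref{Th2} step by step, replacing the multinomial p.g.f.\ by the negative multinomial one. The p.g.f.\ of an $NMn(s;p_1,\dots,p_k)$ vector is $G_{Y_{11},\dots,Y_{k1}}(z_1,\dots,z_k)=\bigl(\frac{p_0}{1-p_1z_1-\cdots-p_kz_k}\bigr)^{s}$ for $|p_1z_1+\cdots+p_kz_k|<1$. Plugging this into the general characterisation (\ref{PGFGeneral}) and abbreviating $G_{sN}(t):=\E\,t^{sN}=G_N(t^{s})$ gives (\ref{pgfCNMn}) at once, which is part~1. Part~8 is then the case $k=1$ of part~1: setting $z_1=\cdots=z_k=z$ in (\ref{sumpgf}) and using $1-(p_1+\cdots+p_k)=p_0$ yields $G_{sN}\!\bigl(\frac{p_0}{1-(1-p_0)z}\bigr)$, the p.g.f.\ of $CNNMn(s;1-p_0)$.

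For part~2 I would write $G_{sN}(y)=\sum_{n\ge 1}P(N=n)\,y^{sn}$ (the term $n=0$ only feeds the atom at the origin), substitute $y=\frac{p_0}{1-\sum_\ell p_\ell z_\ell}$, and expand with the negative-multinomial series $(1-u_1-\cdots-u_k)^{-sn}=\sum_{x_1,\dots,x_k\ge 0}\frac{(sn+x_1+\cdots+x_k-1)!}{x_1!\cdots x_k!(sn-1)!}\,u_1^{x_1}\cdots u_k^{x_k}$ with $u_\ell=p_\ell z_\ell$. Reading off the coefficient of $z_1^{x_1}\cdots z_k^{x_k}$ (equivalently, applying (\ref{pmf})) produces (\ref{pmfCNMnnot0}), while $\vec z=\vec 0$ in (\ref{pmf0}) gives $P(X_{1N}=0,\dots,X_{kN}=0)=G_N(p_0^{s})=G_{sN}(p_0)$. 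Part~3 comes from substituting $z_j=1$ for the redundant coordinates in (\ref{pgfCNMn}): the denominator becomes $1-\sum_{j\notin\{i_1,\dots,i_r\}}p_j-\sum_{m=1}^{r}p_{i_m}z_{i_m}=\bigl(p_0+\sum_{m}p_{i_m}\bigr)\bigl(1-\sum_{m}\rho_{i_m}z_{i_m}\bigr)$, so the subvector has p.g.f.\ $G_{sN}\!\bigl(\frac{\rho_{0r}}{1-\sum_m\rho_{i_m}z_{i_m}}\bigr)$; by part~1 it is $CNNMn(\vec a_N;s,\rho_{i_1},\dots,\rho_{i_r})$, and its p.m.f.\ (\ref{pmfsubsetsNMn}) follows from part~2 applied to this smaller compound.

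Parts~4 and~5 are just Wald's equalities (\ref{expectation})--(\ref{cov}) and the general covariance identity (\ref{covRaluca}) fed with the moments of the $NMn$ marginals and pairs. From $G_{Y_{i1}}(z)=\bigl(\frac{1-\rho_i}{1-\rho_i z}\bigr)^{s}$ with $\rho_i=\frac{p_i}{p_0+p_i}$ one reads $\E\,Y_{i1}=\frac{sp_i}{p_0}$, $Var\,Y_{i1}=\frac{sp_i(p_0+p_i)}{p_0^{2}}$, and for $NMn$ vectors $cov(Y_{i1},Y_{j1})=\frac{sp_ip_j}{p_0^{2}}$; substituting gives $E\,X_{iN}$, $Var\,X_{iN}$, (\ref{FICNMn}) and $cov(X_{iN},X_{jN})=\frac{sp_ip_j}{p_0^{2}}(E\,N+s\,Var\,N)$. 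For (\ref{corelationXNMn}) I would note the algebraic simplification $cov(X_{iN},X_{jN})=E\,X_{jN}\,(FI\,X_{iN}-1)=E\,X_{iN}\,(FI\,X_{jN}-1)$, whence $cor^{2}(X_{iN},X_{jN})=\frac{cov^{2}}{Var\,X_{iN}\,Var\,X_{jN}}=\frac{(FI\,X_{iN}-1)(FI\,X_{jN}-1)}{FI\,X_{iN}\,FI\,X_{jN}}$.

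Finally, part~6 uses the definition of conditional probability together with the bivariate and univariate subset p.m.f.'s of part~3 (for the pair $i,j$ and for $j$ alone, with the corresponding $\rho$'s), simplified; the case $x_j=0$ uses $P(X_{jN}=0)=G_{sN}\!\bigl(\frac{p_0}{p_0+p_j}\bigr)$. Part~7 mimics part~7 of Theorem~\ref{Th2}: compute $E(X_{iN}\mid X_{jN}=x_j)$ from (\ref{conditionalexpectation}) (or by differentiating the conditional p.g.f.\ of part~6 at $z_i=1$), obtain a ratio of two power series in $P(N=n)$, and identify the numerator and the denominator as multiples of $P(X_{jN}=x_j+1)$ and $P(X_{jN}=x_j)$ via the marginal p.m.f.\ of part~3, so that the factorials and powers of $\rho_j$ collapse to the factor $\frac{p_i}{p_j}(x_j+1)$. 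For part~9, by part~1 the pair $(X_{iN},\sum_{\ell\ne i}X_{\ell N})$ is $CNNMn(\vec a_N;s,p_i,1-p_0-p_i)$; using the $k=2$ p.m.f.\ of part~2 and the sum p.m.f.\ from part~8, the ratio $P(X_{iN}=m,\ \sum_{\ell\ne i}X_{\ell N}=j-m)/P(X_{1N}+\cdots+X_{kN}=j)$ collapses, after cancelling the common factor $\frac{(sn+j-1)!}{(sn-1)!}p_0^{sn}P(N=n)$ inside the two sums, to $\binom{j}{m}\bigl(\frac{p_i}{1-p_0}\bigr)^{m}\bigl(1-\frac{p_i}{1-p_0}\bigr)^{j-m}$, i.e.\ $Bi(j;\frac{p_i}{1-p_0})$. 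The main obstacle is not conceptual but bookkeeping: getting the negative-multinomial coefficient in (\ref{pmfCNMnnot0}) exactly right (including the range of summation and the justification of rearranging the series), and carrying the factorial and power manipulations through parts~6 and~7 without slips.
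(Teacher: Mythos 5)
Your proposal is correct and follows essentially the same route as the paper: plug the $NMn$ p.g.f.\ into the general Theorem~\ref{ThGeneral}, expand/identify coefficients for the p.m.f.'s, set redundant arguments to $1$ for subsets, feed the $NMn$ moments into Wald's and the covariance identities, and reduce the conditional quantities to ratios of the marginal p.m.f.'s. The only cosmetic differences are that in part~5 you verify (\ref{corelationXNMn}) via the covariance and the identity $cov(X_{iN},X_{jN})=E\,X_{jN}(FI\,X_{iN}-1)$ where the paper substitutes $[CV(Y_{i1},Y_{j1})]^2=1/s$ into (\ref{corelationX}), and in part~2 you use the series expansion of the p.g.f.\ where the paper primarily invokes the total probability formula and the convolution property of $NMn$ --- both alternatives the paper itself acknowledges as equivalent.
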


\begin{proof}
\begin{enumerate}
\item  The equality (\ref{pgfCNMn}) follows by (\ref{PGFGeneral}) and the formula of the p.g.f. of NMn distribution. See e.g. (36.1) in \cite{bib3}, who use different parametrisation.
\item The formula of the p.m.f. at $(0, 0, ..., 0)$ is an immediate consequence of (\ref{pmf0}), the fact that $G_N(z^s) = G_{sN}(z)$ and the form of p.m.f. of $(Y_{11}, Y_{21}, ..., Y_{k1}) \sim NMn(s, p_1, p_2, ..., p_k)$ at $(0, 0, ..., 0)$.

    To prove (\ref{pmfCNMnnot0}) we can use the Total probability formula (the partition of the sample space describe the possible values of $N$), and the fact that $n$-th fold convolution on $NMn(s, p_1, p_2, ..., p_k)$ is $NMn(ns, p_1, p_2, ..., p_k)$. The same formula can be obtained also by (\ref{pmf}) and relation between p.m.f. and p.g.f.
\item Equality 1), Th. \ref{Th3} entails (\ref{pgfsubsetNMn}), when replace redundant variables with 1 and use one of the main properties of p.g.f., see e.g. p.327 in \cite{Denuitetal}. Using (\ref{subsetNMn}) and the definition (\ref{Def1}), we have that

    $$(X_{i_1N},  X_{i_2N}, ..., X_{i_rN}) \sim C\,N\,NMn(\vec{a}_N; s, \rho_{i_1}, \rho_{i_2}, ..., \rho_{i_r}).$$

     Now we use (\ref{pmfsubsetsNMn}) and complete the proof.
\item The substitution of the mean $E Y_{i1} = s\frac{\rho_i}{1 - \rho_{i}} = s\frac{p_i}{p_0}$,  and the variance  $Var Y_{i1} = s\frac{p_i(p_0 + p_i)}{p_0^2}$, $i = 1, 2, ..., k,$ of NMn distribution in (\ref{expectation}) and (\ref{veriance}) gives us $E\,X_{i_N}$ and $Var\,X_{i_N}$, $i = 1, 2, ..., k$.
\item Using the properties of $NMn$ distribution we have that
$$[CV\,(Y_{i1}, Y_{i2})]^2 = \frac{1}{s}, \,(CV\, Y_{i1})^2 = \frac{1}{s}(1 + \frac{p_0}{p_i}),\, (CV\, Y_{j1})^2 = s^{-1}(1 + \frac{p_0}{p_j}).$$
  Now the equation (\ref{corelationX}), together with (\ref{FICNMn}) entail the desired result.
\item The definitions for conditional probability and (\ref{pmfsubsetsNMn}), applied for $r = 1$ and $r = 2$, imply (\ref{conditionalNMn}).
In analogous way we prove the case when $X_{jN} = 0$.

\item Let $x_j$ be a fixed positive integer.  In the next calculations we first use the formula for the mean, then we change the order of summation, use the derivatives of the geometric series, make some algebra and obtain.
$$E (X_{iN}| X_{jN} = x_j) = \sum_{x_j = 1}^\infty \frac{1}{(x_i - 1)!}\left( \frac{p_0 + p_j}{p_0 + p_i + p_j} \right)^{x_j} \left( \frac{p_i}{p_0 + p_i + p_j} \right)^{x_i}.$$
    $$ .\frac{ \sum_{n = 1}^\infty \frac{(sn + x_i + x_j - 1)!}{(sn - 1)!}\left( \frac{p_0}{p_0 + p_i + p_j} \right)^{sn} P(N = n)} {\sum_{n = 1}^\infty \frac{(sn + x_j - 1)!}{(sn - 1)!}\left( \frac{p_0}{p_0 + p_j} \right)^{sn} P(N = n)} = $$
$$ = \left( \frac{p_i}{p_0 + p_i + p_j} \right)\sum_{n = 1}^\infty \frac{P(N = n)}{(sn - 1)!}\left( \frac{p_0}{p_0 + p_i + p_j} \right)^{sn}\left( \frac{p_0 + p_j}{p_0 + p_i + p_j} \right)^{x_j} .$$
    $$ .\frac{ \sum_{x_j = 1}^\infty \frac{(sn + x_i + x_j - 1)!}{(x_i - 1)!}\left( \frac{p_i}{p_0 + p_i + p_j} \right)^{x_i - 1}}{\sum_{n = 1}^\infty \frac{(sn + x_j - 1)!}{(sn - 1)!}\left( \frac{p_0}{p_0 + p_j} \right)^{sn} P(N = n)} = $$
$$ = \left( \frac{p_i}{p_0 + p_i + p_j} \right)\sum_{n = 1}^\infty \frac{P(N = n)}{(sn - 1)!}\left( \frac{p_0}{p_0 + p_i + p_j} \right)^{sn}\left( \frac{p_0 + p_j}{p_0 + p_i + p_j} \right)^{x_j} .$$
    $$ .\frac{ \frac{(sn + x_j)!}{\left(1 -  \frac{p_i}{p_0 + p_i + p_j} \right)^{sn + x_j + 1}}}{\sum_{n = 1}^\infty \frac{(sn + x_j - 1)!}{(sn - 1)!}\left( \frac{p_0}{p_0 + p_j} \right)^{sn} P(N = n)} = $$
$$ = (x_j + 1)\frac{p_i}{p_j}\frac{\left( \frac{p_j}{p_0 + p_j} \right)^{x_j + 1}\sum_{n = 1}^\infty \frac{P(N = n)(sn + x_j)!}{(sn - 1)! (x_j + 1)!}\left( \frac{p_0}{p_0 + p_j} \right)^{sn}}{\left(\frac{p_j}{p_0 + p_j}\right)^{x_j}\sum_{n = 1}^\infty \frac{(sn + x_j - 1)!}{(sn - 1)!x_j!}\left( \frac{p_0}{p_0 + p_j} \right)^{sn} P(N = n)} = $$
    $$ = \frac{p_i}{p_j}(x_j + 1)\frac{P(X_{jN} = x_j + 1)}{P(X_{jN} = x_j)}.$$
Finally we have just compared the previous expression with $P(X_{jN} = x_j + 1)$ and $P(X_{jN} = x_j)$, using (\ref{pmfsubsetsNMn}), for $r = 1$.
\item These distributions are obtained by (\ref{sumpgf}), 1) Th. \ref{Th3} for $z_1 = z_2 = ... = z_k = z$ and (\ref{pgfsubsetNMn}), for $r = 1$.
\item By (\ref{Def1}) and the definition of the NMn distribution, we have

   $(X_{iN}, X_{1N} + X_{2N} + ... + X_{i-1N} + X_{i+1N} + ... + X_{kN}) \sim$

  \hfill $\sim  C\,N\,Mn(\vec{a}_N; s, p_i, 1 - p_0 - p_i)$.

  This, together with the definitions for conditional probability, (\ref{pmfCMn}) and $Mn$ (via its p.m.f.) entail

  $P(X_{iN} = m| X_{1N} + X_{2N} + ... + X_{kN} = j) = $
  $$ = \frac{P(X_{iN} = m, X_{1N} +  ... + X_{i-1N} + X_{i+1N} + ... + X_{kN} = j - m)}{P(X_{1N} + X_{2N} + ... + X_{kN} = j)} =$$
  $$= \frac{p_i^m (1 - p_0 - p_i)^{j-m} \sum_{n = 1}^{\infty} \left(\begin{array}{c} sn + j - 1\\ m, j - m, sn-1 \\ \end{array} \right)p_0^{ns}P(N = n).}{(1 - p_0)^j \sum_{n = 1}^{\infty} \left( \begin{array}{c} sn + j - 1\\ j \\ \end{array}\right)p_0^{ns}P(N = n).} = $$
  $$ =  \left(\begin{array}{c}
               j \\
               m \\
             \end{array}
           \right)  \left(\frac{p_i}{1 - p_0}\right)^{m} (1 - \frac{p_i}{1 - p_0})^{j},\quad m = 0, 1, ..., j.$$
             Now we use the definition of $Bi$ distribution and complete the proof.
\end{enumerate}
\end{proof}

{\it Note:}  1.   In order to observe the form of the dependence between the r.vs. $X_{1N}$, $X_{2N}$, we have simulated $100$ realizations of $(X_{1N}, X_{2N}) \sim$ $C\,LS\,NMn(p_L;$ $s, p_1,$ $p_2)$ and have plotted the scatter plots of the observations. Here $LS(p_L)$ means the Logarithmic series distribution with parameter  $p_L \in (0, 1)$. Four examples of such scatter plots, for different values of the parameters $p_L$, $s$, $p_1$ and $p_2$, are given on Figures 5 - 8. They show that the dependence between these two r.vs. is approximately linear, with slope, close to $\frac{p_i}{p_j}$. The strength of the dependence increases with parameters $s$ and $p_L$. These empirical observations are synchronised with the theoretical relations, expressed in the equations (\ref{MSRegressionNMn}), (\ref{corelationXNMn}) and (\ref{FICNMn}).

  \begin{center}
\scalebox{0.5}{\includegraphics[draft=false]{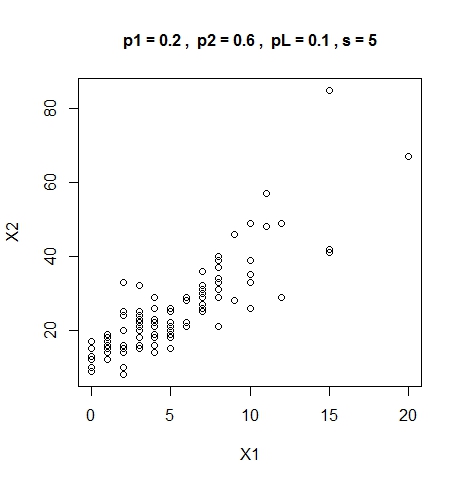}}

{\small Fig. 5. }
\end{center}

 \begin{center}
\scalebox{0.5}{\includegraphics[draft=false]{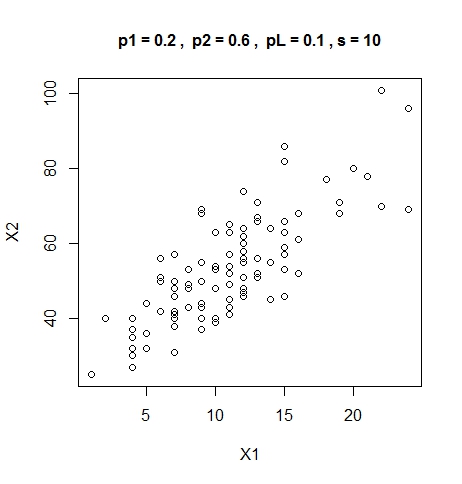}}

{\small Fig. 6. }
\end{center}

   \begin{center}
\scalebox{0.5}{\includegraphics[draft=false]{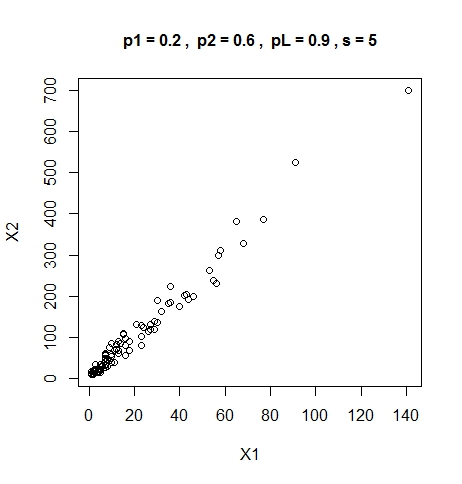}}

{\small Fig. 7. }
\end{center}

 \begin{center}
\scalebox{0.5}{\includegraphics[draft=false]{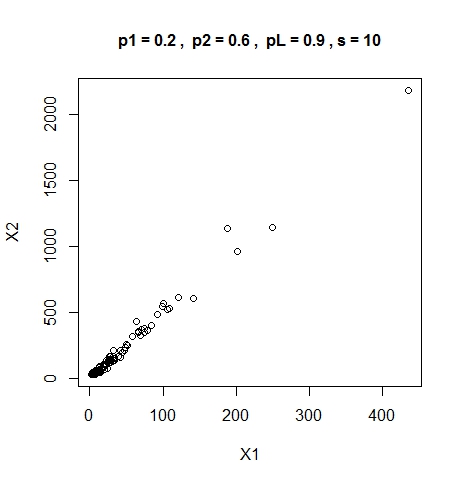}}

{\small Fig. 8. }
\end{center}

2. Similarly to the previous case, the relation  (\ref{corelationXNMn}) shows that the strength of the linear dependence between $X_{iN}$ and $X_{jN}$ is completely governed by the values of $FI\, X_{iN}$ and $FI\, X_{jN}$.  When the last two Fisher indexes increase, the correlation between $X_{iN}$ and $X_{jN}$ also increases. From the other side, the equation (\ref{FICNMn}) entails that the bigger the $FI\,N$, or $s$, or $p_0$, or $p_i$ are, the bigger the $FI\, X_{iN}$ is. So, via $FI\, X_{iN}$, the last increases the  strength of the straight-line dependence between $X_{iN}$ and $X_{jN}$. On the examples on Figure 3 and Figure 4 we observe that the bigger the s, the stronger the linear dependence is.

\section{Conclusive remarks}

The paper considers the properties of the distributions of multivariate random sums with equal number of summands. Some simulations of realizations of particular cases of random vectors with such distributions are made, and the dependence between their coordinates is observed. These investigations show that these distributions are appropriate for modelling of both: linear dependence between the coordinates and clustering in the observations. From theoretical point of view, in both particular cases, described here, these results can be seen in equalities (\ref{MSRegression}) and (\ref{MSRegressionNMn}). For the general case, without fixing neither the distribution of the number of summands, nor the distribution of summands, the second formula in (\ref{veriance}) shows that, the bigger the Fisher index(FI) of the number of summands, or the expectation, or the FI of the summands are, the bigger the FI of the corresponding random sum is. The strength of the linear dependence between the coordinates $X_{iN}$ of the random sums and the number of summands $N$, expressed in (\ref{cor}), decreases with the increments of the FI of $N$ or the coefficient of variation of the summands.

If  $(X_{1N}, X_{2N}) \sim C\, LS\, Mn (p_L; s, p_1, p_2)$ the dependence between these two r.vs. is approximately linear, with slope almost $\frac{p_i}{p_j}$. The relation  (\ref{correlationCMnFI}) entails that the strength of the dependence between $X_{iN}$ and $X_{jN}$ is governed by $FI\, X_{iN}$ and $FI\, X_{jN}$.  When the last two Fisher indexes increase, the correlation between $X_{iN}$ and $X_{jN}$ also increases. Equation (\ref{FICMn}) entails that the bigger the $FI\,N$, or $s$ are, the bigger the $FI\, X_{iN}$ is. Figures 1 - 4 show that for bigger parameters $s$ or $p_L$ we obtain stronger linear dependence between $X_{iN}$ and $X_{jN}$.

It is interesting that in the other, considered particular case here, i.e. if  $(X_{1N}, X_{2N}) \sim C\, LS\, NMn (p_L; s, p_1, p_2)$, the form of the dependence between $X_{1N}$ and $X_{2N}$ is again approximately linear, with slope almost $\frac{p_i}{p_j}$, for all possible discrete distributions of $N$. This is synchronised with (\ref{MSRegressionNMn}). The strength of the dependence increases with $s$ and $p_L$. The equation (\ref{FICNMn}) entails that the bigger the $FI\,N$, or $s$, or $p_0$, or $p_i$ are, the bigger the $FI\, X_{iN}$ is. Similarly to the previous case, the relation  (\ref{corelationXNMn}) shows that the strength of the linear dependence between $X_{iN}$ and $X_{jN}$ is completely governed by the values of $FI\, X_{iN}$ and $FI\, X_{jN}$.  When the last two Fisher indexes increase, the correlation between $X_{iN}$ and $X_{jN}$ also increases.

\bigskip

\normalsize \noindent \sl
Pavlina Kalcheva Jordanova

Shumen University

115 Universitetska str.,

9700 Shumen, Bulgaria

e-mail: {\tt pavlina$\_$kj@abv.bg}

\vfill \eject

{\hspace{1pt}} \label{end}

\end{document}